\documentclass[aop,preprint]{imsart}

\RequirePackage[OT1]{fontenc}
\RequirePackage{amsmath,amsthm,amssymb,mathrsfs}
\RequirePackage[numbers]{natbib}
\RequirePackage[colorlinks,citecolor=blue,urlcolor=blue,breaklinks=true]{hyperref}

\usepackage[letterpaper]{geometry}
\geometry{hmargin={1.65in,1.65in}, vmargin={1.45in, 1.45in}}

\startlocaldefs
\newcommand{\bb}{\ensuremath{\mathbb B}}
\newcommand{\N}{\ensuremath{\mathbb N}}
\newcommand{\G}{\ensuremath{\mathscr G}}
\newcommand{\bp}[1]{\ensuremath{\mathbb P}_\mu \left( #1 \right)}
\renewcommand{\leqslant}{\leq}

\newtheorem{theorem}{Theorem}
\newtheorem{lemma}[theorem]{Lemma}

\newtheorem{utheorem}{\textrm{\textbf{Theorem}}}

\theoremstyle{definition}
\newtheorem{definition}[theorem]{Definition}

\endlocaldefs

\begin{document}

%{{{1 Frontmatter
\begin{frontmatter}

\title{The Hoffmann--J{\o}rgensen inequality\\ in metric semigroups\thanksref{T1}}
\runtitle{The Hoffmann--J{\o}rgensen inequality in metric
semigroups}

\begin{aug}
\author{\fnms{Apoorva}~\snm{Khare}\ead[label=e1]{khare@iisc.ac.in}}
\and
\author{\fnms{Bala}~\snm{Rajaratnam}\ead
[label=e2]{brajaratnam01@gmail.com}}

%\author{\fnms{Apoorva} \snm{Khare}\ead[label=e1]{khare@stanford.edu}} and
%\author{\fnms{Bala}
%\snm{Rajaratnam}\ead[label=e2]{brajaratnam01@gmail.com}}
\runauthor{A. Khare and B. Rajaratnam}

\affiliation{Stanford University, University of California, Davis, and
University of Sydney}

\address[A]{Department of Mathematics\\
Indian Institute of Science\\
Bangalore 560012\\
India\\
\printead{e1}%\\\printead{e2}
}
\address[B]{Department of Statistics\\
Stanford University\\
390 Serra Mall\\
Stanford, California 94305\\
and\\
University of California, Davis\\
One Shields Avenue\\
Davis, California 95616\\
USA\\
\printead{e2}}

%\address{Stanford University\\ 390 Serra Mall\\ Stanford, California
%94305\\ USA\thanksref{T1} and \hfill University of California, Davis\\
%One Shields Avenue\\ Davis, California 95616\\ USA\thanksref{T2}}

\received{\smonth{5} \syear{2016}}
\revised{\smonth{10} \syear{2016}}
\doi{10.1214/16-AOP1160}
\volume{45}
\issue{6A}
\firstpage{4101}
\lastpage{4111}
\pubyear{2017,}

\thankstext{T1}{Supported in part by US Air Force Office of Scientific
Research Grant award FA9550-13-1-0043, US National Science Foundation
under Grant DMS-0906392, DMS-CMG 1025465, AGS-1003823, DMS-1106642,
DMS-CAREER-1352656, Defense Advanced Research Projects Agency DARPA YFA
N66001-111-4131, the UPS Foundation, and SMC-DBNKY.}
\end{aug}

\begin{abstract}
We prove a refinement of the inequality by Hoffmann--J{\o}rgensen that is
significant for three reasons.
First, our result improves on the state-of-the-art even for real-valued
random variables.
Second, the result unifies several versions in the Banach space
literature, including those by
Johnson and Schechtman
[\textit{Ann. Probab.} \textbf{17} (1989) 789--808],
Klass and Nowicki
[\textit{Ann. Probab.} \textbf{28} (2000) 851--862],
and Hitczenko and Montgomery-Smith
[\textit{Ann. Probab.} \textbf{29} (2001) 447--466].
Finally, we show that the Hoffmann--J{\o}rgensen inequality (including
our generalized version) holds not only in Banach spaces but more
generally, in a very primitive mathematical framework required to state
the inequality: a metric semigroup $\mathscr{G}$. This includes normed
linear spaces as well as all compact, discrete, or (connected) abelian
Lie groups.
\end{abstract}

\begin{keyword}[class=MSC]
\kwd[Primary ]{60E15}
\kwd[; secondary ]{60B15.}
\end{keyword}

\begin{keyword}
\kwd{Hoffmann--J{\o}rgensen inequality}
\kwd{metric semigroup.}
\end{keyword}

\end{frontmatter}
%}}}

%{{{1 Section 1 - Introduction
\section{Introduction}

In this paper, our goal is to present a broad generalization of the
Hoffmann--J{\o}rgensen inequality (see Theorem~\ref{Thj}). This is a
classical result in the literature, which is widely used in bounding sums
of independent random variables, with several different versions proved
in the general setting of a separable Banach space (see
\cite{HM,JS,KN,LT}). We recall a ``first version'' from the literature.

\begin{theorem}[Ledoux and Talagrand, {\cite{LT}, Proposition
6.7]}]\label{Thjlt}
Suppose $\bb$ is a separable Banach space, and $(\Omega, \mathscr{A},
\mu)$ is a probability space with $X_1, \dots, X_n \in L^0(\Omega,\bb)$
independent random variables.
For $1 \leqslant j \leqslant n$, define $S_j := X_1 + \cdots + X_j$ and
$U_n := \max_{1 \leqslant j \leqslant n} \| S_j \|$. Then
\[
\bp{U_n > 3t+s} \leqslant \bp{U_n > t}^2 +
\bp{\max_{1 \leqslant j \leqslant n} \| X_j \| > s},
\qquad \forall s,t \in (0,\infty).
\]
\end{theorem}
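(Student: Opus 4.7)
The plan is to proceed via a stopping-time decomposition, which is the standard route for Hoffmann-J{\o}rgensen-style inequalities. Define the first passage time $\tau := \min\{j \leqslant n : \|S_j\| > t\}$, with the convention $\tau = \infty$ if no such $j$ exists. Since $U_n > 3t+s$ forces $U_n > t$ and hence $\tau \leqslant n$, I can partition the event $\{U_n > 3t+s\}$ according to the value of $\tau$, and further intersect with either $\{\max_j \|X_j\| > s\}$ (which contributes the second term on the right-hand side) or its complement.

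On the event $\{\tau = j\} \cap \{\max_i \|X_i\| \leqslant s\}$, the definition of $\tau$ gives $\|S_{j-1}\| \leqslant t$, so by the triangle inequality $\|S_j\| \leqslant \|S_{j-1}\| + \|X_j\| \leqslant t+s$. If additionally $U_n > 3t+s$, say $\|S_k\| > 3t+s$ for some $k$, then necessarily $k > j$ (since $\|S_j\| \leqslant t+s < 3t+s$) and, again by the triangle inequality, $\|S_k - S_j\| > \|S_k\| - \|S_j\| > 2t$. Thus on this event $\max_{j < k \leqslant n} \|S_k - S_j\| > 2t$.

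The key observation is now a second application of the triangle inequality: $\|S_k - S_j\| > 2t$ implies $\|S_k\| + \|S_j\| > 2t$, hence $\max(\|S_k\|,\|S_j\|) > t$, hence $U_n > t$. Therefore
\[
\{\max_{j < k \leqslant n} \|S_k - S_j\| > 2t\} \subseteq \{U_n > t\}.
\]
Combining with the previous step,
\[
\bp{\{U_n > 3t+s\} \cap \{\max_i \|X_i\| \leqslant s\} \cap \{\tau = j\}} \leqslant \bp{\{U_n > t\}^{(j)} \cap \{\tau = j\}},
\]
where the superscript $(j)$ indicates that the event $\{U_n > t\}$ has been re-expressed purely in terms of $X_{j+1},\dots,X_n$ (through partial sums of $S_k - S_j$), and is therefore independent of $\{\tau = j\} \in \sigma(X_1,\dots,X_j)$. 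Factoring the probability and bounding the $X_{j+1},\dots,X_n$ piece by $\bp{U_n > t}$ gives the bound $\bp{U_n > t} \cdot \bp{\tau = j}$.

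Finally, I sum over $j = 1,\dots,n$, observing that $\sum_j \bp{\tau = j} = \bp{\tau \leqslant n} = \bp{U_n > t}$, and add back the contribution of $\{\max_i \|X_i\| > s\}$ to obtain the stated inequality. The main conceptual obstacle is identifying the correct way to convert the shifted-partial-sum event $\{\max_{j<k\leqslant n}\|S_k-S_j\|>2t\}$ into the unshifted event $\{U_n > t\}$; once the triangle-inequality trick of the third paragraph is in hand, the rest is bookkeeping with the stopping time and independence.
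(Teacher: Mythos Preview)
Your argument is correct and is the classical stopping-time proof of the Hoffmann--J{\o}rgensen inequality. The paper does not prove this statement directly; instead it obtains it as the specialization $\G=\bb$, $z_0=z_1=0$, $k=2$, $n_1=n_2=1$, $t_1=t_2=t$ of the main Theorem~\ref{Thj}. That said, when the proof of Theorem~\ref{Thj} is unwound at those parameter values it collapses to exactly your argument: a first-passage time $m_1$ for $\|S_j\|>t$, a second index $m_2>m_1$ with $\|S_{m_2}-S_{m_1}\|>2t$, the triangle-inequality trick $\|S_k-S_j\|\leqslant\|S_k\|+\|S_j\|$ (the paper's Step~4, Equation~\eqref{Ebounds}) to pass from the shifted event back to $\{U_n>t\}$, and independence to factor. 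So the routes differ in presentation (direct versus corollary-of-a-general-theorem) but not in mathematical content. One expository quibble: your notation $\{U_n>t\}^{(j)}$ is misleading, since $\{U_n>t\}$ itself is not $\sigma(X_{j+1},\dots,X_n)$-measurable; what you are really using is that the \emph{smaller} event $\{\max_{j<k\leqslant n}\|S_k-S_j\|>2t\}$ is both $\sigma(X_{j+1},\dots,X_n)$-measurable and contained in $\{U_n>t\}$, which is precisely the chain of inequalities your final paragraph needs.
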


This version incorporates results by Kahane \cite{Kah} and
Hoffmann--J{\o}rgensen \cite{HJ}.
(See also \cite{HM} for a detailed history of the inequality.)

Theorem~\ref{Thjlt} has seen subsequent generalizations by several
authors, including Johnson and Schechtman \cite{JS}, Klass and Nowicki
\cite{KN}, and Hitczenko and Montgomery-Smith \cite{HM}. This last
variant is now stated as the following.

\begin{theorem}[Hitczenko and Montgomery-Smith, {\cite{HM}}, Theorem
1]\label{Thm}
(Notation as in Theorem~\ref{Thjlt}.) For all $K \in \N$ and $s,t \in
(0,\infty)$,
\[
\bp{U_n > 2Kt + (K-1)s} \leqslant \frac{1}{K!} \left( \frac{\bp{U_n >
t}}{\bp{U_n \leqslant t}} \right)^K + \bp{\max_{1 \leqslant j \leqslant
n} \| X_j \| > s}.
\]
\end{theorem}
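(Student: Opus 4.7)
My approach is a direct stopping-time construction, in the spirit of L\'evy--Ottaviani and Klass--Nowicki. Define $\tau_0 := 0$ and, for $k \ge 1$,
\[
\tau_k := \inf\bigl\{ j > \tau_{k-1} : \| S_j - S_{\tau_{k-1}} \| > 2t \bigr\},
\]
with $\inf \emptyset := \infty$, so that $\tau_k$ is the $k$-th renewal epoch at level $2t$ in the random walk.

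The first step is a deterministic containment on the event $\{\max_j \|X_j\| \le s\}$: I would show that $\{U_n > 2Kt + (K-1)s\} \subseteq \{\tau_K \le n\}$. The argument is triangle-inequality bookkeeping. For $j \in (\tau_{k-1}, \tau_k)$, $\|S_j - S_{\tau_{k-1}}\| \le 2t$, while at a renewal $\|S_{\tau_k} - S_{\tau_{k-1}}\| \le \|S_{\tau_k - 1} - S_{\tau_{k-1}}\| + \|X_{\tau_k}\| \le 2t + s$, so telescoping yields $\|S_{\tau_k}\| \le k(2t+s)$ and, between renewals, $\|S_j\| \le \|S_{\tau_{k-1}}\| + 2t$. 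If $\tau_K > n$ then at most $K-1$ renewals occur, and both bookkeepings give $U_n \le 2Kt + (K-1)s$. Taking complements,
\[
\bp{U_n > 2Kt + (K-1)s} \le \bp{\tau_K \le n} + \bp{\max_{1 \le j \le n} \|X_j\| > s}.
\]

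The second step is the probabilistic estimate
\[
\bp{\tau_K \le n} \le \frac{1}{K!} \left( \frac{\bp{U_n > t}}{\bp{U_n \le t}} \right)^K.
\]
Conditioning on the $\sigma$-algebra $\mathcal{F}_{\tau_{k-1}}$ at the stopping time and using the independence of the post-$\tau_{k-1}$ increments, a L\'evy--Ottaviani-type maximal inequality yields $\bp{\tau_k \le n \mid \mathcal{F}_{\tau_{k-1}}} \le \bp{U_n > t}/\bp{U_n \le t}$. Naive iteration of this conditional bound only produces the $K$-th power; the factorial factor $1/K!$ would then come from a symmetrization over the $K!$ orderings of the renewal blocks, in the style of Klass--Nowicki, where the ordered-statistic constraint $\tau_1 < \cdots < \tau_K$ is exchanged for a sum over unordered configurations.

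\textbf{Main obstacles.} The chief difficulty is precisely producing the factorial factor $1/K!$ rather than the naive $K$-th power: this demands a genuinely combinatorial input beyond the one-step L\'evy estimate, namely an exchangeability/averaging argument over the $K$ renewal excursions. A secondary issue, central to the paper's contribution, is that every manipulation must be recast using only the metric structure of $\G$, since a general metric semigroup affords neither inversion nor subtraction. Each quantity $\|S_j - S_{\tau_{k-1}}\|$ must be replaced by the two-variable metric $d(S_j, S_{\tau_{k-1}})$, and each triangle inequality in the containment argument re-verified in that abstract setting; this is largely notational but is the payoff of formulating the stopping-time construction in terms of \emph{differences} of partial sums rather than the norms $\|S_j\|$ themselves.
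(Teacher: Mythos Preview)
Your plan is essentially the paper's proof of Theorem~A specialized to $k=1$: your stopping times $\tau_k$ are the paper's $m_l$ (the paper takes the first threshold as $t$ rather than $2t$, which sharpens $2Kt$ to $(2K-1)t$), and your containment argument on $\{\max_j\|X_j\|\le s\}$ is exactly its Step~2. The factorial bound is obtained precisely as in the paper's Steps~3--6: decompose $\{\tau_K\le n\}$ over the position vector ${\bf m}=(m_1,\dots,m_K)$, convert each increment-crossing probability $p_{m_{j-1},m_j,t}$ into an absolute first-passage probability $p_{m_j,t}/\bp{U_n\le t}$ via the conditioning step you call L\'evy--Ottaviani, and then bound the ordered sum $\sum_{m_1<\cdots<m_K}\prod_j p_{m_j,t}$ by $\tfrac{1}{K!}\bigl(\sum_m p_{m,t}\bigr)^K = \tfrac{1}{K!}\bp{U_n>t}^K$.
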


While isoperimetric methods provide more powerful techniques to work
with, the aforementioned manifestations of the Hoffmann--J{\o}rgensen
inequality for Banach spaces also have numerous consequences in
estimating the magnitude and behavior of the quantities $\|S_n\|$ and
$U_n$, as explained in \cite{HM,KN}, for instance.

We now present several motivations behind the present paper. First, our
main result in Theorem~\ref{Thj} provides an improvement on
Theorems~\ref{Thjlt} and~\ref{Thm} above.
Note, Theorem~\ref{Thm} has a variant via the order statistics of the
variables $Y_j := \| X_j \|$ (see \cite{HM}). Our result improves on this
strengthening as well.

Second, it is not clear if either of Theorems~\ref{Thjlt} or~\ref{Thm}
follows from the other, or if they are even logically related. Our result
(Theorem~\ref{Thj}) simultaneously unifies and significantly generalizes
both of these results.

A third motivation arises out of independent mathematical and applied
interest. Note that to state the above inequalities, one requires merely
the notions of a metric and a binary associative operation. Thus, a
question of interest is to ascertain whether the result holds in the more
general setting of a separable metric semigroup $\G$ (defined below).

In this paper, we provide a positive answer to the above question. Thus,
we show Theorem~\ref{Thj} in a very primitive mathematical setting
required to state the Hoffmann--J{\o}rgensen inequality.
Our motivations in so doing are both modern as well as traditional.
Classically, a cornerstone of twentieth-century probability theory has
been the systematic and rigorous development of the field, for random
variables taking values in Banach spaces. At the same time, general
results on Fourier analysis and Haar measure for compact abelian groups,
and the study of random variables with values in metric groups
\cite{Gre,Ru} motivate the need to develop results in the greatest
possible generality. The present paper lies squarely in this area.

Additionally, an increasing number of modern-day settings involve working
outside the traditional Banach space paradigm. Indeed, settings of
compact and abelian Lie groups are studied in the literature, including
permutation groups, lattices and other discrete (semi)groups, circle
groups and tori. Moreover, modern data are manifold-valued -- including
in real/complex Lie groups -- as opposed to the traditionally
well-studied normed linear spaces.
Other modern settings include the space of graphons with the cut-norm
\cite{Lo}, as well as the space of labelled graphs $\G(V)$ on a fixed
vertex set $V$, which was studied in \cite{KR1,KR2}. The space $\G(V)$
turns out to be a $2$-torsion group, and hence cannot embed as a subgroup
into a normed linear space. Thus, Banach space methods are not adequate
to study stochastic phenomena in modern-day settings. To this end, this
paper allows for studying tail estimates and bounding random sums in
greater generality.
%}}}

%{{{1 Section 2 - Metric semigroups and the main result
\section{Metric semigroups and the main result}

We now set some notation and state our main result.

\begin{definition}
A \textit{metric semigroup} is defined to be a semigroup $(\G, \cdot)$
equipped with a metric $d_\G : \G \times \G \to [0,\infty)$ that is
translation-invariant:
\begin{equation}
d_\G(ac,bc) = d_\G(a,b) = d_\G(ca,cb), \qquad \forall a,b,c \in \G.
\end{equation}

Equivalently, $(\G,d_\G)$ is a metric space equipped with a associative
binary operation $\cdot$ such that $d_\G$ is translation-invariant.
\end{definition}

Metric (semi)groups are ubiquitous in probability theory. Examples
include Banach spaces such as function spaces, discrete semigroups
(including finite groups as well as labelled graph space $\G(V)$
\cite{KR1,KR2}), and all compact or (connected) abelian Lie groups, which
include the circle and tori (via, e.g., \cite[Theorem V.5.3]{St}). Among
other examples are amenable groups (see \cite[Proposition 4.12]{CSC} and
the discussion around it) and abelian Hausdorff metrizable topologically
complete groups \cite{Kl}.

\begin{definition}
Suppose $(\G, d_\G)$ is a separable metric semigroup, with Borel
$\sigma$-algebra $\mathscr{B}_\G$. Given integers $1 \leqslant j
\leqslant n$ and random variables $X_1, \dots, X_n : (\Omega,
\mathscr{A}, \mu) \to (\G, \mathscr{B}_\G)$, define
\begin{equation}
S_j(\omega) := X_1(\omega) \cdots X_j(\omega), \qquad
M_j(\omega) := \max_{1 \leqslant i \leqslant j} d_\G(z_0, z_0
X_i(\omega)),
\end{equation}

\noindent where $z_0 \in \G$ is arbitrary. (We show below, $M_j$ is
independent of $z_0 \in \G$.)
\end{definition}

We now state our main result, namely, the aforementioned generalization
of the Hoffmann--J{\o}rgensen inequality, for separable metric semigroups.

\begin{utheorem}\label{Thj}
Suppose $(\G, d_\G)$ is a separable metric semigroup, $z_0, z_1 \in \G$
are fixed, and $X_1, \dots$, $X_n \in L^0(\Omega,\G)$ are independent.
Also fix integers $k, n_1, \dots, n_k \in \N$ and nonnegative scalars
$t_1, \dots, t_k, s \in [0,\infty)$, and define
\begin{equation}
U_n := \max_{1 \leqslant j \leqslant n} d_\G(z_1, z_0 S_j), \ \
I_0 := \{ 1 \leqslant i \leqslant k : \bp{U_n \leqslant t_i}^{n_i -
\delta_{i1}} \leqslant \frac{1}{n_i!} \},
\end{equation}

\noindent where $\delta_{i1}$ denotes the Kronecker delta.
Now if $\sum_{i=1}^k n_i \leqslant n+1$, then
\begin{align}\label{Ehj}
&\ \bp{U_n > (2 n_1 - 1) t_1 + 2 \sum_{i=2}^k n_i t_i + \left(
\sum_{i=1}^k n_i - 1 \right) s}\\
\leqslant &\ \bp{U_n \leqslant t_1}^{{\bf 1}_{1 \notin I_0}} \prod_{i \in
I_0} \bp{U_n > t_i}^{n_i} \prod_{i \notin I_0} \frac{1}{n_i!} \left(
\frac{\bp{U_n > t_i}}{\bp{U_n \leqslant t_i}} \right)^{n_i}\notag\\
&\ + \bp{M_n > s}.\notag
\end{align}

More generally, define
\begin{align*}
&\ K := \sum_{i=1}^k n_i, \qquad Y_j := d_\G(z_0, z_0 X_j),\\
&\ Y_{(1)} := \min(Y_1, \dots, Y_n), \quad \dots, \quad
Y_{(n)} := \max(Y_1, \dots, Y_n),
\end{align*}

\noindent so that $Y_{(j)}$ are the order statistics of the $Y_j$. Then
the above inequality can be strengthened by replacing $\bp{M_n > s}$ by
\[
\bp{\sum_{j=n-K+2}^n Y_{(j)} > (K-1)s}.
\]
\end{utheorem}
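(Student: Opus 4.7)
The plan is to carry out an iterated stopping-time argument in the spirit of the classical Hoffmann--J\o rgensen proof, but phrased purely in terms of the translation-invariance of $d_\G$ so as to avoid any reliance on group inverses or on the linear structure of a Banach space. The key algebraic input, established earlier in the paper, is that translation invariance makes the quantity $\|x\| := d_\G(z, zx)$ independent of $z \in \G$, so $M_j = \max_{i \leq j} \|X_i\|$ is well-defined, and one has, for any $\ell < j$,
\[
d_\G(z_0 S_\ell, z_0 S_j) \;=\; \|X_{\ell+1}\cdots X_j\|, \qquad d_\G(z_1, z_0 S_j) \;\leq\; d_\G(z_1, z_0 S_\ell) + \|X_{\ell+1}\cdots X_j\|.
\]
Together these also yield the symmetrisation-free inequality $\max_{\ell < j \leq n} \|X_{\ell+1}\cdots X_j\| \leq 2 U_n$, so that $\bp{\max_{\ell < j \leq n} \|X_{\ell+1}\cdots X_j\| > 2t} \leq \bp{U_n > t}$ without any iid assumption on the $X_j$'s. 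These are the only metric-semigroup ingredients needed; the rest of the argument is purely probabilistic.

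I would then fix a linear ordering $(t_{i_m})_{m=1}^K$ of the thresholds---listing $t_1$ first ($n_1$ times), then $t_2$ ($n_2$ times), \ldots, $t_k$ ($n_k$ times)---and define $\tau_0 := 0$, $\tau_1 := \min\{j : d_\G(z_1, z_0 S_j) > t_1\}$, and
\[
\tau_m := \min\{j > \tau_{m-1} : \|X_{\tau_{m-1}+1}\cdots X_j\| > 2 t_{i_m}\} \qquad (2 \leq m \leq K),
\]
with $\min \emptyset := \infty$. An elementary induction on $m$, using the triangle-type bound above together with the fact that $\|X_{\tau_m}\| \leq s$ whenever $M_n \leq s$, shows that the event on the left-hand side of \eqref{Ehj}, intersected with $\{M_n \leq s\}$, is contained in $\{\tau_K \leq n\}$. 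The asymmetric constant $2n_1 - 1$ (versus $2 n_i$ for $i \geq 2$) arises because the first crossing is measured from $z_1$ and costs only $t_1$, whereas each later crossing invokes the triangle inequality and so costs $2 t_{i_m}$; the coefficient $(K-1)$ of $s$ reflects the fact that only the jumps $\|X_{\tau_1}\|, \dots, \|X_{\tau_{K-1}}\|$ need be controlled, the final jump $\|X_{\tau_K}\|$ not being used. Invoking independence of $X_{\tau_{m-1}+1}, \dots, X_n$ from the past at the stopping time $\tau_{m-1}$, together with the symmetrisation-free bound, one obtains $\bp{\tau_m \leq n \mid \tau_{m-1} = \ell} \leq \bp{U_n > t_{i_m}}$ for every $\ell$. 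Multiplying through gives the crude bound $\prod_i \bp{U_n > t_i}^{n_i}$ on $\bp{\tau_K \leq n}$, up to the $\bp{U_n \leq t_1}^{\mathbf{1}\{1 \notin I_0\}}$-correction that reflects the slightly different role of $\tau_1$.

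For indices $i \notin I_0$ one obtains instead the sharper Hitczenko--Montgomery-Smith-type factor $\tfrac{1}{n_i!}(\bp{U_n > t_i}/\bp{U_n \leq t_i})^{n_i}$ by a symmetrisation over the $n_i$ crossings at level $t_i$: rewriting the product of $n_i$ conditional crossings so as to pick up a $\bp{U_n \leq t_i}^{n_i}$ factor in the denominator, and extracting the $1/n_i!$ through an exchangeability/ordering argument over the $n_i$ possible orderings of the crossing positions. The condition defining $I_0$ is calibrated precisely so that the crude bound is smaller when $i \in I_0$ and the Hitczenko--Montgomery-Smith bound is smaller when $i \notin I_0$, so taking the optimal bound at each threshold produces the combined product on the right-hand side of \eqref{Ehj}. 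For the order-statistics strengthening, one replaces $\{M_n > s\}$ by $\{\sum_{m=1}^{K-1} \|X_{\tau_m}\| > (K-1)s\}$: since $\tau_1, \dots, \tau_{K-1}$ are $K-1$ distinct indices in $\{1, \dots, n\}$, the sum is dominated by $\sum_{j=n-K+2}^n Y_{(j)}$, the sum of the top $K-1$ order statistics of $Y_1, \dots, Y_n$. The principal obstacle I anticipate is the careful bookkeeping in (i) establishing the telescoping inclusion $\{U_n > (2n_1-1)t_1 + 2\sum_{i \geq 2} n_i t_i + (K-1)s\} \cap \{M_n \leq s\} \subseteq \{\tau_K \leq n\}$ with the exact asymmetric coefficients, and (ii) fusing the crude and Hitczenko--Montgomery-Smith-type bounds into a single product inequality that realises the $I_0$/non-$I_0$ dichotomy term-by-term; once these combinatorial steps are in place, the probabilistic content follows from the stopping-time independence machinery above.
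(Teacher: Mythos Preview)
Your proposal is correct and follows essentially the same approach as the paper's proof: both define the iterated crossing indices (your $\tau_m$, the paper's $m_l$) with the first crossing measured against $z_1$ at threshold $t_1$ and subsequent crossings measured by $d_\G(S_{m_{l-1}},S_{m_l})$ at threshold $2t'_l$; both establish the telescoping inclusion that forces $\tau_K\le n$ on $\{U_n>\zeta\}\cap\{Y\le (K-1)s\}$; both bound the resulting product via the two alternatives (the crude $\bp{U_n>t_i}^{n_i}$ bound versus the conditional Hitczenko--Montgomery-Smith bound with the $1/n_i!$ from symmetrising orderings), with the $\delta_{i1}$ asymmetry arising exactly as you describe. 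The only cosmetic difference is that the paper sums over deterministic index-vectors $\mathbf m$ rather than speaking of stopping times and conditional probabilities, but the underlying computation is the same.
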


Theorem~\ref{Thj} generalizes the original Hoffmann--J{\o}rgensen
inequality in many ways: mathematically it is a significant
generalization of Theorem~\ref{Thjlt} (which itself generalizes the
classical Hoffmann--J{\o}rgensen inequality for Euclidean, Hilbert, and
Banach spaces). To see this, set
\[
\G = \bb, \quad z_0 = z_1 = 0, \quad k = 2, \quad n_1 = n_2 = 1, \quad
t_1 = t_2 = t.
\]

Now Theorem~\ref{Thjlt} follows from Theorem~\ref{Thj} with
$I_0 = \{ 1, 2 \}$.

Moreover, Theorem~\ref{Thj} also generalizes \cite[Theorem 1]{HM}, that
is, Theorem~\ref{Thm} -- which has different bounds than
Theorem~\ref{Thjlt}. To see this, set
$\G = \bb, \ z_0 = z_1 = 0, \ k = 1, \ n_1 = K, \ t_1 = t$.
Now the first expression on the right-hand side of equation \eqref{Ehj}
can be rewritten as follows:
\begin{equation}\label{Ecases}
\prod_{i=1}^k \bp{U_n > t_i}^{n_i} \min \left( 1, \frac{1}{n_i!
\cdot \bp{U_n \leqslant t_i}^{n_i - \delta_{i1}} } \right).
\end{equation}

Thus with the above values, Theorem~\ref{Thm} follows from
Theorem~\ref{Thj}:
\begin{align*}
&\ \bp{U_n > 2Kt + (K-1)s}\\
\leqslant &\ \bp{U_n > (2K-1)t + (K-1)s}\\
\leqslant &\ \bp{M_n > s} + \bp{U_n > t}^K \min \left( 1, \frac{1}{K!
\cdot \bp{U_n \leqslant t}^{K-1}} \right)\\
\leqslant &\ \bp{M_n > s} + \frac{1}{K!} \left( \frac{\bp{U_n >
t}}{\bp{U_n \leqslant t}} \right)^K.
\end{align*}

Second, in \cite{HM} it is not shown whether or not the variant of the
Hoffmann--J{\o}rgensen inequality (Theorem~\ref{Thm}) can be reconciled
with Theorem~\ref{Thjlt}. Our result achieves this goal, thus unifying
and simultaneously generalizing variants from the literature, including
by Johnson and Schechtman \cite{JS}, Klass and Nowicki \cite{KN} and
Hitczenko and Montgomery-Smith \cite{HM}.

Finally, Theorem~\ref{Thj} does not require a norm, group structure,
commutativity, or completeness, but is valid in the primitive
mathematical setting of separable metric semigroups. Thus, the result is
a significant generalization of the original inequality by
Hoffmann--J{\o}rgensen.
%}}}

%{{{1 Section 3 - Proof of the theorem
\section{Proof of the theorem}

In order to prove Theorem~\ref{Thj}, we first study basic properties of
metric semigroups $\G$. We begin with the \textit{triangle inequality} in
$\G$, which is straightforward, and used without further reference.

\begin{equation}
d_\G(y_1 y_2, z_1 z_2) \leqslant d_\G(y_1, z_1) + d_\G(y_2, z_2), \qquad
\forall y_i, z_i \in \G.
\end{equation}

We also require the following lemma, which provides a work around for the
``norm'' in a metric semigroup, when there is no identity element.

\begin{lemma}
Given a metric semigroup $(\G, d_\G)$, and $a,b \in \G$,
\begin{equation}\label{Estrict}
d_\G(a,ba) = d_\G(b,b^2) = d_\G(a,ab)
\end{equation}

\noindent is independent of $a \in \G$.
\end{lemma}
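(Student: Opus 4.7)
The plan is to prove both equalities by a two-step application of translation invariance, without ever invoking an identity or inverse. The key observation is that the equation $d_\G(ac,bc) = d_\G(a,b)$ lets us ``cancel'' a common right factor from the two arguments of $d_\G$, and symmetrically for a common left factor.

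For the first equality $d_\G(a,ba) = d_\G(b,b^2)$, I would begin with the left side and apply left translation invariance with multiplier $b$, rewriting
\[
d_\G(a, ba) = d_\G(b \cdot a,\ b \cdot ba) = d_\G(ba, b^2 a).
\]
Then I would apply right translation invariance to the common right factor $a$ to obtain
\[
d_\G(ba, b^2 a) = d_\G(b, b^2),
\]
which combines with the previous display to give the claim. The second equality $d_\G(a,ab) = d_\G(b,b^2)$ is proved symmetrically: right-multiply both arguments by $b$ to pass from $d_\G(a,ab)$ to $d_\G(ab, ab^2)$, then left-cancel the common factor $a$ to land on $d_\G(b, b^2)$.

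Independence of $a \in \G$ follows at once, since the common value $d_\G(b,b^2)$ depends only on $b$. I do not expect any real obstacle here: the argument uses only the two translation-invariance identities in the definition, each exactly once per equality. The one point worth being careful about is that no identity element or cancellation law is available, so the derivation must be arranged so that each step is a direct substitution into one of the two invariance identities (rather than, say, an attempt to ``strip'' $a$ from $ba$ via inverses).
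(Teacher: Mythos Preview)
Your proof is correct and is essentially identical to the paper's own argument: the paper writes the single chain $d_\G(a,ba) = d_\G(ba, b^2 a) = d_\G(b, b^2) = d_\G(ab, a b^2) = d_\G(a, ab)$, which is exactly your four applications of translation invariance strung together. There is nothing to add.
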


\begin{proof}
Compute using the translation-invariance of $d_\G$:
\[
d_\G(a,ba) = d_\G(ba, b^2 a) = d_\G(b, b^2) = d_\G(ab, a b^2) = d_\G(a,
ab). \qedhere
\]
\end{proof}

Now we show the main result of the paper.

\begin{proof}[Proof of Theorem~\ref{Thj}]
Our proof follows in part the argument in \cite{HM}; however, we are able
to streamline some of the steps and provide novel techniques that help
generalize the result to its present form. For convenience, the proof is
divided into steps.\smallskip

\textit{Step 1.}
Define $K := \sum_{i=1}^k n_i$, and given $1 \leqslant l \leqslant K$,
let $t'_l := t_i$ if $\sum_{j=1}^{i-1} n_j < l \leqslant \sum_{j=1}^i
n_j$. Also define
\begin{equation}
\zeta := (2 n_1 - 1) t_1 + 2 \sum_{i=2}^k n_i t_i + \left( \sum_{i=1}^k
n_i - 1 \right)s, \quad Y := \sum_{j=n-K+2}^n Y_{(j)}.
\end{equation}

Now if $Y > (K-1)s$, then it is clear that $M_n > s$. Thus, the
inequality is strengthened by replacing $\bp{M_n > s}$ by $\bp{Y >
(K-1)s}$. (Note that this strengthening of the inequality was originally
suggested in the setting of Banach spaces by Rudelson in \cite{HM}.) Now
set $\Omega_1 := \{ \omega \in \Omega : U_n(\omega) > \zeta, Y(\omega)
\leqslant (K-1)s \}$. Then
\[
\bp{U_n > \zeta} \leqslant \bp{Y > (K-1)s} + \bp{\Omega_1}.
\]

Thus, we will restrict ourselves to $\Omega_1$. Define $m_0 = m_0(\omega)
:= 0$, and let $m_1(\omega) > 0$ be the smallest integer such that
$d_\G(z_1, z_0 S_{m_1}(\omega)) > t_1$. Note that such an $m_1(\omega)$
exists because $t_1 \leqslant \zeta$ and $\omega \in \Omega_1$.\smallskip

\textit{Step 2.}
For this step, fix $\omega \in \Omega_1$. In this step, we inductively
define integers
\[
m_l = m_l(\omega),\ \text{ with }\
0 = m_0 < m_1 < m_2 < \cdots < m_K \leqslant n
\]

\noindent as follows: $m_1$ is as above, and given $m_{l-1}$ for $l>1$,
define $m_l$ to be the least integer $> m_{l-1}$ such that
$d_\G(S_{m_{l-1}}, S_{m_l}) > 2t'_l$. To do so, we first \textit{claim}
that such an integer $m_l$ exists for all $1 \leqslant l \leqslant K$.

To show this claim, suppose to the contrary that such an $m_l$ does not
exist (for the smallest such $l>1$). Then for all $\beta > m_{l-1}$,
$d_\G(S_{m_{l-1}}, S_\beta) \leqslant 2 t'_l$. We now make the
\textit{sub-claim} that
\[
d_\G(z_1, z_0 S_\alpha(\omega)) \leqslant t'_1 + \sum_{j=2}^l 2 t'_j +
(K-1) s \leqslant \zeta, \qquad \forall 1 \leqslant \alpha \leqslant n.
\]

Notice that the sub-claim contradicts the fact that we are restricted to
$\omega \in \Omega_1$, thereby proving the claim. Thus, it suffices to
show the sub-claim. To do so, we consider various cases: if $\alpha <
m_1$, then $d_\G(z_1, z_0 S_\alpha) \leqslant t'_1$, so we are done.
Next, if $\alpha \in (m_i, m_{i+1})$ for some $0<i<l-1$, then compute
using equation~\eqref{Estrict}, and that $Y \leqslant (K-1)s$ on
$\Omega_1$:
\begin{alignat*}{2}
&& d_\G(z_1, z_0 S_\alpha) \leqslant &\ d_\G(z_1, z_0 S_{m_1 - 1})
+ d_\G(z_0 S_{m_i-1}, z_0 S_{m_i}) + d_\G(z_0 S_{m_i}, z_0 S_\alpha)\\
&& {} + &\ \sum_{j=2}^i [d_\G(z_0 S_{m_{j-1} - 1}, z_0 S_{m_{j-1}}) +
d_\G(z_0 S_{m_{j-1}}, z_0 S_{m_j - 1})]\\
&& \leqslant &\ t_1 + \sum_{j=2}^i (Y_{m_{j-1}} + 2t'_j) + Y_{m_i} + 2
t'_{i+1} \leqslant t'_1 + 2 \sum_{j=2}^{l-1} t'_j + Y\\
&& \leqslant &\ t'_1 + 2 \sum_{j=2}^{l-1} t'_j + (K-1)s.
\end{alignat*}

There are two other cases with similar computations (hence are skipped):
\begin{itemize}
\item If $\alpha = m_i$ for some $i<l$, then
\[
d_\G(z_1, z_0 S_\alpha) \leqslant t'_1 + 2 \sum_{j=2}^i t'_j + (i+1)s
\leqslant t'_1 + 2 \sum_{j=2}^{l-1} t'_j + (K-1)s.
\]

\item If $\alpha \in (m_{l-1},n]$, then the sub-claim follows by using
that $d_\G(S_{m_{l-1}}, S_\alpha)$ $\leqslant 2 t'_l$ from above.
\end{itemize}

Proceeding by induction on $l$, the above analysis in this step proves
the claim about the existence of $0 = m_0(\omega) < \dots < m_K(\omega)
\leqslant n$, for all $\omega \in \Omega_1$.\medskip

\textit{Step 3.}
Given a strictly increasing sequence ${\bf m} := (m_1, \dots, m_K)$ such
that $0 = m_0 < m_1 < \cdots < m_K \leqslant n$, define $\Omega_{\bf m}$
to be the subset of all $\omega \in \Omega_1$ such that $m_i(\omega) =
m_i$ for all $i$. Then $\Omega_1$ is the disjoint union of the
$\Omega_{\bf m}$.

Now given $0 \leqslant \alpha < \beta \leqslant n$ and $t>0$, define:
\begin{align}
\begin{aligned}
p_{\alpha,\beta,t} := &\ \bp{d_\G(z_0 S_\alpha, z_0 S_\beta) > 2t
\geq d_\G(z_0 S_\alpha, z_0 S_j)\ \forall \alpha \leqslant j <
\beta},\\
\ & \\
p_{\beta,t} := &\ \bp{d_\G(z_1, z_0 S_\beta) > t \geq
d_\G(z_1, z_0 S_j)\ \forall 0 \leqslant j < \beta},
\end{aligned}
\end{align}

\noindent where by equation~\eqref{Estrict}, we may disregard the $z_0$'s
occurring in $p_{\alpha,\beta,t}$ except for $\alpha=0$, in which case we
define $z_0 S_0 := z_0$. Then by independence of the $X_j$ [and
equation~\eqref{Estrict}],
\[
\bp{\Omega_{\bf m}} \leqslant p_{m_1, t'_1} \prod_{j=2}^K p_{m_{j-1},
m_j, t'_j}.
\]

This allows us to continue the computations toward proving the result:
\begin{align}\label{Esum}
\begin{aligned}
\bp{U_n > \zeta} \leqslant &\ \bp{Y > (K-1)s} + \bp{\Omega_1}\\
\leqslant &\ \bp{Y > (K-1)s} + \sum_{{\bf m}} p_{m_1, t'_1} \prod_{j=2}^K
p_{m_{j-1}, m_j, t'_j}.
\end{aligned}
\end{align}

\textit{Step 4.}
For the next steps in the computations, we bound $\sum_{\beta = \alpha +
1}^\gamma p_{\alpha, \beta, t}$ in two different ways, where $\alpha,
\beta, \gamma \in \N$. First,
\begin{align*}
\sum_{\beta = \alpha + 1}^\gamma p_{\alpha,\beta,t} = &\
\bp{\max_{\beta \in (\alpha,\gamma]} d_\G(S_\alpha, S_\beta) > 2t}\\
\leqslant &\ \bp{\max_{\beta \in (\alpha,\gamma]} d_\G(z_1, z_0 S_\alpha) +
d_\G(z_1, z_0 S_\beta) > 2t}\\
\leqslant &\ \bp{2 U_\gamma > 2t} = \bp{U_\gamma > t}.
\end{align*}

(Here, $U_\gamma$ is defined similar to $U_n$.) Similarly,
\[
\sum_{\beta = 1}^\gamma p_{\beta,t} = \bp{\max_{\beta \in [1,\gamma]}
d_\G(z_1, z_0 S_\beta) > t} = \bp{U_\gamma > t}.
\]

Next, if $\bp{U_\alpha \leqslant t} > 0$, then using the independence of
the $X_j$,
\begin{align*}
& \sum_{\beta = \alpha + 1}^\gamma p_{\alpha,\beta,t}\\
&\ = \bp{\max_{\beta \in (\alpha,\gamma]} d_\G(S_\alpha, S_\beta) > 2t} =
\bp{\max_{\beta \in (\alpha,\gamma]} d_\G(S_\alpha, S_\beta) > 2t\ |\
U_\alpha \leqslant t}\\
&\ \leqslant \frac{\bp{\max_{\alpha < \beta \leqslant \gamma} d_\G(z_1,
z_0 S_\beta) > t \mbox{ and } \max_{1 \leqslant \beta \leqslant \alpha}
d_\G(z_1, z_0 S_\beta) \leqslant t}}{\bp{U_\alpha \leqslant t}}\\
&\ = \frac{1}{\bp{U_\alpha \leqslant t}} \sum_{\beta = \alpha + 1}^\gamma
p_{\beta,t}.
\end{align*}

These calculations are summarized in the following system of
inequalities:
\begin{align}\label{Ebounds}
\begin{aligned}
\sum_{\beta = \alpha + 1}^\gamma p_{\alpha, \beta, t} \leqslant
\bp{U_\gamma > t} = \sum_{\beta = 1}^\gamma p_{\beta,t},\\
\sum_{\beta = \alpha + 1}^\gamma p_{\alpha, \beta, t} \leqslant
\frac{1}{\bp{U_\alpha \leqslant t}} \sum_{\beta = \alpha + 1}^\gamma
p_{\beta,t}.
\end{aligned}
\end{align}

\textit{Step 5.}
We now perform what is in a sense the ``main step'' of the computation.
More precisely, we use the previous step to bound from above the
following expression from equation~\eqref{Esum}:
\[
\widetilde{S} := \sum_{{\bf m}} p_{m_1, t'_1} \prod_{j=2}^K p_{m_{j-1},
m_j, t'_j},
\]

\noindent where the summation is over all $0 < m_1 < \cdots < m_K
\leqslant n$.

For $1 \leqslant i \leqslant k+1$, define $s_i := \sum_{j=1}^{i-1} n_j$.
Then $t'_l = t_i$ for $s_i < l \leqslant s_i + n_i$. Suppose $k>1$. We
bound $\widetilde{S}$ via induction on $k$, presented here in a reverse
manner. Namely, we sum first over $m_j$ for $j \in (s_k, s_{k+1}] =
(K-n_k, K]$; then over $j \in (s_{k-1}, s_k]$; and so on, reducing to the
base case $k=1$ (addressed in the next step).
In the present step, we stop after one round of summation.
\[
\widetilde{S} = \sum_{{\bf m}_k} p_{m_1, t'_1} \prod_{j=2}^{s_k}
p_{m_{j-1}, m_j, t'_j} \cdot \sum_{\alpha_0 = m_{s_k} < \alpha_1 < \cdots
< \alpha_{n_k} \leqslant n} \prod_{j=1}^{n_k}
p_{\alpha_{j-1},\alpha_j,t_k},
\]

\noindent where the outer sum is over ${\bf m}_k := \{ m_j : j \leqslant
s_k \}$.
We claim that for all fixed $m_j$ for $j \not\in (s_k, s_k+n_k]$, the
inner sum can be bounded above by an expression occurring in
Theorem~\ref{Thj} [see~\eqref{Ecases}]. More precisely, we claim:
\begin{align}\label{Eestimate}
\begin{aligned}
&\ \sum_{\alpha_0 = m_{s_k} < \alpha_1 < \cdots < \alpha_{n_k} \leqslant
n} \prod_{j=1}^{n_k} p_{\alpha_{j-1},\alpha_j,t_k}\\
\leqslant &\ \bp{U_n > t_k}^{n_k} \min \left( 1, \frac{1}{n_k! \cdot
\bp{U_n \leqslant t_k}^{n_k}} \right)
\end{aligned}
\end{align}

\noindent (note, $k>1$). To see why, using~\eqref{Ebounds}, the sum
in~\eqref{Eestimate} is bounded above by
\begin{align*}
&\ \sum_{\alpha_0 = m_{s_k} < \alpha_1 < \cdots < \alpha_{n_k} \leqslant
n} \prod_{j=1}^{n_k} p_{\alpha_{j-1},\alpha_j,t_k}\\
= &\ \sum_{\alpha_0 = m_{s_k} < \alpha_1 < \cdots < \alpha_{n_k - 1}
\leqslant n} \prod_{j=1}^{n_k - 1} p_{\alpha_{j-1},\alpha_j,t_k} \cdot
\sum_{\alpha_{n_k} = \alpha_{n_k - 1} + 1}^n p_{\alpha_{n_k - 1},
\alpha_{n_k}, t_k}\\
\leqslant &\ \sum_{\alpha_0 = m_{s_k} < \alpha_1 < \cdots < \alpha_{n_k -
1} \leqslant n} \prod_{j=1}^{n_k - 1} p_{\alpha_{j-1},\alpha_j,t_k} \cdot
\bp{U_n > t_k}\\
\leqslant &\ \bp{U_n > t_k} \sum_{\alpha_0 = m_{s_k} <
\cdots < \alpha_{n_k - 2} \leqslant n} \prod_{j=1}^{n_k - 2}
p_{\alpha_{j-1},\alpha_j,t_k}\\
& \qquad \times \sum_{\alpha_{n_{k-1}} = \alpha_{n_{k-2} + 1}}^n
p_{\alpha_{n_{k-2}},\alpha_{n_{k-1}},t_k}.
\end{align*}

Continuing inductively, we obtain an upper bound of $\bp{U_n >
t_k}^{n_k}$.

Next, if $\bp{U_n \leqslant t_k} > 0$, then we bound the sum
in~\eqref{Eestimate} using~\eqref{Ebounds}, as in the proof of
\cite[Theorem 1]{HM}; this yields an upper bound of
\[
\sum_{\alpha_0 = 0 < \alpha_1 < \cdots < \alpha_{n_k} \leqslant n}
\prod_{j=1}^{n_k} p_{\alpha_{j-1},\alpha_j,t_k} \leqslant
\frac{1}{\bp{U_n \leqslant t_k}^{n_k}} \sum_{1 \leqslant \alpha_1 <
\cdots < \alpha_{n_k} \leqslant n} \prod_{j=1}^{n_k} p_{\alpha_j,t_k}.
\]

Since $n_k$ distinct numbers may be arranged in $n_k!$ ways, adopting an
argument in the proof of \cite[Theorem 1]{HM} shows the right-hand side
is at most
\begin{equation}\label{Elast}
\frac{1}{n_k!} \frac{1}{\bp{U_n \leqslant t_k}^{n_k}} \left( \sum_{\beta
= 1}^n p_{\beta,t_k} \right)^{n_k} = \frac{1}{n_k!} \frac{\bp{U_n >
t_k}^{n_k}}{\bp{U_n \leqslant t_k}^{n_k}}.
\end{equation}

This analysis proves the claim in~\eqref{Eestimate}. Note as
in~\eqref{Ecases}, the minimum corresponds precisely to whether or not $k
\in I_0$, as in the statement of the theorem. (The statement of the
result also includes the case when $\bp{U_n \leqslant t_k} = 0$.)\medskip

\textit{Step 6.}
Starting from~\eqref{Esum}, we now have a nested sum over $m_j$, $j \in
[1, s_k]$, as the estimate obtained in~\eqref{Elast} can be taken outside
the sum over the $m_j$. Repeat the computation in Step 5, summing over
the $m_j$ with $j \in (s_{k-1}, s_k]$; then over $j \in (s_{k-2},
s_{k-1}]$; and so on. This yields the expression for $k=1$:
\begin{align*}
\widetilde{S} \leqslant \prod_{1 < i \in I_0} & \bp{U_n > t_i}^{n_i}
\prod_{1 < i \notin I_0} \frac{1}{n_i!} \left( \frac{\bp{U_n >
t_i}}{\bp{U_n \leqslant t_i}} \right)^{n_i}\\
& \times \sum_{\{ m_j : j \in [1,n_1] \}} p_{m_1, t_1} \prod_{j=2}^{n_1}
p_{m_{j-1}, m_j, t_1}.
\end{align*}

It remains to find an upper bound for this last summation. To do so,
follow the computations in the previous step, using
equation~\eqref{Ebounds}. Thus, on the one hand, this summation is again
at most $\bp{U_n > t_1}^{n_1}$. On the other hand, it is bounded above,
assuming that $\bp{U_n \leqslant t_1} > 0$, by
\begin{align*}
&\ \frac{1}{\bp{U_n \leqslant t_1}^{n_1 - 1}} \sum_{1 \leqslant m_1 <
\cdots < m_{n_1} \leqslant n} \prod_{j=1}^{n_1} p_{m_j, t_1}\\
\leqslant &\ \frac{1}{\bp{U_n \leqslant t_1}^{n_1 - 1}} \frac{1}{n_1!}
\left(\sum_{\beta=1}^n p_{\beta,t_1} \right)^{n_1}\\
= &\ \bp{U_n > t_1}^{n_1} \cdot \frac{1}{n_1! \cdot \bp{U_n \leqslant
t_1}^{n_1 - 1}},
\end{align*}

\noindent and by equation \eqref{Ecases}, this completes the proof of the
theorem.
\end{proof}
%}}}

\subsection*{Concluding remarks}

The validity of the Hoffmann--J{\o}rgensen inequality in the metric
semigroup setting suggests further work along two directions.
First, the Banach space version of this inequality is an important result
in the literature that is widely used in bounding sums of independent
Banach space-valued random variables. Having proved Theorem~\ref{Thj}, we
apply it in related work \cite{KR3} to obtain similar tail bounds for
sums of independent metric semigroup-valued random variables.
Additionally, in \cite{KR4} %%%%% \cite{KR4}
we study other probability inequalities for metric (semi)groups, such as
the Khinchin--Kahane inequality, together with its connections to
embedding abelian normed metric groups into (minimal) Banach spaces.

\section*{Acknowledgments}
We thank David Montague and Doug Sparks for carefully going through an
early draft of the paper and providing detailed feedback, which improved
the exposition. We also thank the anonymous referee for providing
technical feedback that helped clarify the proof.
%B.~Rajaratnam is/was also a visiting professor at the University of
%Sydney during part of this work.

%{{{1 Bibliography

%}}}

\end{document}